\newtheorem{thm}{Theorem}[section]
\newtheorem*{thm*}{Theorem}
\newtheorem{cor}[thm]{Corollary}
\newtheorem*{cor*}{Corollary}
\newtheorem{lem}[thm]{Lemma}
\newtheorem{prop}[thm]{Proposition}
\newtheorem*{con*}{Conjecture}
\theoremstyle{definition}
\newtheorem{defn}[thm]{Definition}
\newtheorem*{ack}{Acknowledgements}
\newtheorem*{ob}{Observation}
\newtheorem{question}{Question}
\theoremstyle{remark}
\newtheorem{rem}[thm]{Remark}
\newtheorem*{example*}{Example}
\newcommand{\R}{\mathbb{R}}
\newcommand{\Z}{\mathbb{Z}}
\begin{document}

\title{Coboundary Expanders}

\author[Dominic Dotterrer]{Dominic Dotterrer}
\email{dominicd@math.toronto.edu}
\address{Department of Mathematics, University of Toronto} 

\author[Matthew Kahle]{Matthew Kahle}
\email{mkahle@math.ias.edu}
\address{School of Mathematics, Institute for Advanced Study}

\begin{abstract}
We describe a higher-dimensional generalization of edge expansion for graphs which applies to arbitrary cell complexes.  This generalization relies on a type of {\it co}-isoperimetric inequality.  We utilize these inequalities to analyze the topological and geometric behavior of some families of random simplicial complexes. \\

{\bf Keywords:} expander graphs, random graphs, isoperimetry
\end{abstract}

\maketitle
\section{Introduction}

The study of expander graphs is by now ubiquitous in mathematics and computer science. For a survey of this vast subject and numerous applications, see \cite{survey}.

Expanders graphs have arisen from several fields.  The first known examples of expanders, due to Pinsker, were probabilistic \cite{P73}. Later Margulis \cite{M88} exhibited an expander family using groups with Kazhdan's property {\bf T}.  Independently, Lubotzky, Philips, and Sarnak \cite{LPS88}, gave explicit examples using Deligne's proof of the Weil conjecture.  Although the first explicit constructions relied on deep number-theoretic or group-theoretic facts, Pinsker's earlier observation was that a sequence of  random $d$-regular graphs already gives an edge-expanding family.

It has long been felt that there should be higher-dimensional analogues of the theory of expander graphs.  Indeed this has been approached from a few points of view already.

Li defined and constructed Ramanujan complexes, higher-dimensional analogues of Ramanujan graphs, based on certain spectral properties \cite{Li}.  Following Lafforgue's proof of the Ramanujan conjectures, Lubotzky, Samuels, and Vishne gave additional examples  \cite{LSV05}, \cite{LSV05-2}.

In \cite{gromov-sing} Gromov considered an expansion property based on filling inequalities and showed that they are sufficient to imply a certain geometric overlap property.  Several families of complexes exhibiting the geometric overlap property were described by Fox, Gromov, Lafforgue, Naor, and Pach in \cite{FGLNP10}, including certain Ramanujan complexes studied earlier as well as new random models.  For an expository account of recent progress in this area see \cite{MW11} or the forthcoming \cite{dominic1}.

The aim of this article is twofold.  First we define a natural generalization of edge expansion from graphs to regular CW complexes.  This generalization is achieved by appealing to what we call ``co-isoperimetry'' since it is defined in terms of the coboundary operator.  The definition is similar to Gromov's filling inequalities --- we compare and contrast the definitions in the final section.

Second, we exhibit several examples of random simplicial complexes which have strong expansion properties with high probability.  In preparing this article it came to our attention that Gromov suggested in \cite{gromov-sing} (section 2.14) that random polyhedra should exhibit a weak form of co-isoperimetry.  However we obtain stronger inequalities here, and give several explicit examples .\\

The format of the rest of this article is as follows.\\

In section \ref{CE}, we review edge expansion of graphs and define coboundary expansion of cell complexes, which is a natural generalization of expansion to higher-dimensional cell complexes.  We also include some comments on geometric and topological interpretations of coboundary expansion.\\

Section \ref{RC} is devoted to introducing random simplicial complexes in particular the Erd\H{o}s-R\'enyi random graph, and its higher-dimensional analogue, the Linial-Meshulam random complex \cite{LM06} \cite{MW09}.  We show in later sections that Linial-Meshulam complexes provide examples of higher-dimensional expanders.  These complexes have well-studied topological and geometric behavior (\cite{BHK11}, \cite{K10}, \cite{ALLM10}), and they provide a template for studying a broader family of random complexes.\\

In section \ref{RS}, using a probabilistic construction, we show that certain kinds of random cell complexes provide expanding families, which is our main result.\\\

In section \ref{DP}, we exhibit several examples of random complexes satisfying the hypothesis of our main theorem. \\

In the final section, we make concluding remarks and pose questions for further study. \\

\begin{ack}
We acknowledge Larry Guth for several helpful conversations.  We also thank the Institute for Advanced Study in Princeton where this work was completed.\\
\end{ack}

\section{Coboundary expansion}\label{CE}

In this section, we discuss a natural generalization of edge expansion to higher-dimensional cell complexes.  We begin by recalling the definition of the edge expansion.

\begin{defn}
For a finite graph $G$ the {\it edge expansion}, $h(G)$, is defined by
$$h(G) := \min_{A \subset G}   \;  \frac{\#E(A,B)}{ {\rm min} \{ |A|, |B| \} } $$
where $\#E(A,B)$ is the number of edges which connects $A$ to its complement $B= G \setminus A $.
\end{defn}

Here $A$ is understood to be a proper nonempty subset of $G$.  The edge expansion, $h(G)$ is also commonly referred to as the {\it Cheeger constant} in analogy with Riemannian geometry.  From the definition, a finite graph is connected if and only $h(G) >0$.

One is often more interested in edge expansion of an infinite family of graphs rather than for any particular graph.

\begin{defn}
A family of graphs $\{ G_n \}_{n \ge 1} $ of bounded degree is an {\it expander family} if $|G_n| \to \infty$ and $\liminf h(G_n) >0$ as $n \to \infty$.
\end{defn}

Although it is common practice to study edge expansion for a sequence of $d$-regular graphs with $d$ fixed, we need not be restricted to regular graphs or even graphs of bounded degree.  Instead, we can compare the edge expansion of $G$ to the {\it maximum degree} $D (G)$ to give a more general definition.

\begin{defn}
A family of graphs $\{ G_n \}^\infty_{n=1} $, is a {\it degree-relative expander family} if $|G_n| \to \infty$ and $\liminf \frac{h(G_n)}{ D(G_n)} >0$ as $n \to \infty$.\\
\end{defn}

\begin{rem} From the definitions it is clear that $\frac{h(G_n)}{ D(G_n)} \le 1$, which suggests that degree might be a reasonable thing to compare with the Cheeger constant.
\end{rem}

\begin{rem}
Examples of degree-relative expander families are the complete graph $K_n$ and the Erd\H{o}s-R\'enyi random graph $G(n,p)$ when $ p \gg \log{n} / n$, as discussed below.
\end{rem}

We give equivalent formulations of these definitions below.  Before we do so, we introduce some notation.

Let $X$ be a polyhedral complex.  We denote the set of $k$-dimensional cells of $X$ as $X^{(k)}$, and the $\Z_2$-vector space of $k$-dimensional cochains by,
$$C^k X := \{ X^{(k)} \to \Z_2 \} $$ 

It is equipped with a natural exterior differential calculus, that is $\Z_2$-linear maps,
$$d : C^k X \to C^{k+1} X \qquad \text{   where   }  \quad d\beta (x) = \sum_{y \in \partial x} \beta (y)$$
and $\partial x$ denotes the boundary of the $(k+1)$-cell $x$.

As usual let $Z^k X$ denote the subspace of cocycles and $B^k X$ the subspace of coboundaries.  Cohomology is defined by $H^k(X) = Z^k X / B^k X$.

\begin{defn}We equip $C^k X$ with the norm, $$||\beta|| := {\rm supp } \beta,$$ and we also refer to the quotient norm with respect to the coboundary map $d$ which we denote $||[\cdot]||$, i.e. $$||[\beta]|| :=  \inf_{\alpha \in C^{k-1} X} || \beta + d \alpha ||.$$
\end{defn}

A more standard notation for quotient norm might be $|| \beta ||_{C^k / B^k}$ but we use $|| [ \beta ] ||$ here for the sake of simplicity.

\begin{rem}  We use $\Z_2$ coefficients throughout, and measure the size of a cochain by its support.  This is natural for our applications, and gives an analogue of edge expansion.  However the role of other coefficients (and other choices of norm) is quite interesting and we discuss it in more detail in the last section.
\end{rem}

\begin{ob}
Consider a set, $A$, of vertices in a graph $G$ as a $\Z_2$-cochain $\beta \in C^0 G$ (i.e. $A = {\rm supp} \beta \subset G$).  Then the differential, $d \beta$ is supported on the edges which connect $A$ to its complement.  Working in the reduced co-chain complex, we see that $B^0 G $ has exactly two elements: one supported on $G$ and one supported on $\emptyset$. With these considerations we can rewrite the edge expansion as
$$h(G) = \min_{\beta \in C^0 G \setminus B^0 G} \; \frac{||d \beta||}{|| [ \beta ] || }.$$
\end{ob}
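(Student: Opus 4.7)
The plan is to verify the two pieces of the ratio separately and then match the minimization domains. For the numerator, I would take $\beta = \mathbf{1}_A \in C^0 G$ to be the indicator cochain of a nonempty proper subset $A$, and compute $d\beta$ directly from the formula $d\beta(e) = \sum_{v \in \partial e} \beta(v)$: for an edge $e = \{u,v\}$ this equals $\mathbf{1}_A(u) + \mathbf{1}_A(v) \bmod 2$, which is $1$ iff exactly one endpoint lies in $A$. Hence $\mathrm{supp}(d\beta) = E(A,B)$ and $\|d\beta\| = \#E(A,B)$, matching the numerator in the definition of $h(G)$.

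For the denominator, I would use the reduced cochain complex, in which $C^{-1}G = \Z_2$ and the differential $C^{-1}G \to C^0 G$ sends the generator to the all-ones cochain $\mathbf{1}_G$. Thus $B^0 G = \{0,\mathbf{1}_G\}$, and for any $\beta \in C^0 G$ the quotient norm is
\[
\|[\beta]\| = \min\bigl(\|\beta\|, \|\beta + \mathbf{1}_G\|\bigr) = \min(|A|, |G \setminus A|) = \min(|A|,|B|),
\]
which is exactly the denominator appearing in the definition of $h(G)$.

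Finally, I would match the domains of minimization. The condition $\beta \in C^0 G \setminus B^0 G$ excludes precisely $\beta = 0$ (i.e.\ $A = \emptyset$) and $\beta = \mathbf{1}_G$ (i.e.\ $A = G$), leaving the proper nonempty subsets of vertices --- exactly the $A$ ranging over in the original definition of edge expansion. Moreover, every $\beta \in C^0 G$ is of the form $\mathbf{1}_A$ for some $A \subseteq G$, since we use $\Z_2$ coefficients, so minimizing over cochains and minimizing over subsets yield identical ratios term-by-term. Combining the three steps gives the claimed identity.

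There is essentially no obstacle here --- the statement is mostly a dictionary between set-theoretic language (vertex subsets and cut edges) and cochain language (indicator cochains and their coboundaries). The only subtle point, which I would be careful to spell out, is the use of the \emph{reduced} cochain complex so that $B^0 G$ has two elements rather than one; without that augmentation, the quotient norm would equal $|A|$ rather than $\min(|A|,|B|)$, and the identity would fail for $|A| > |G|/2$.
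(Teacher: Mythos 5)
Your proposal is correct and follows exactly the same route as the paper's own (implicit) argument: identify $\mathrm{supp}(d\beta)$ with the cut $E(A,B)$, note that in the reduced complex $B^0G=\{0,\mathbf{1}_G\}$ so the quotient norm becomes $\min(|A|,|B|)$, and match the minimization domains. Your closing remark about why the \emph{reduced} complex is needed is exactly the right subtlety to flag.
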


Now that we have reformulated the definition of edge expansion purely in terms of the normed exterior differential calculus, we are in a position to generalize it in a direct way.

\begin{defn}
Let $X$ be a regular CW complex.  The $k$th {\it coboundary expansion} of $X$ is defined by
$$h^k (X) := \min_{\beta \in C^{k} X \setminus B^k X} \frac{ ||d \beta|| }{|| [ \beta ] ||}$$
\end{defn}

An important property of coboundary expansion is that $h^k (X) = 0 $ if and only if $\widetilde{H}^{k} (X) \neq 0 $.  To see this note that $|| d\beta ||= 0$ if and only if $\beta$ is a cocycle, and $|| [\beta] ||= 0$ if and only if $\beta$ is a coboundary.  So we only have $h^k (X)=0$ if there is some $k$-cocycle which is not a coboundary, i.e.\ a nontrivial element of $k$th cohomology.

Note that this definition of coboundary expansion has the following consequence.

\begin{ob}
Suppose that a cell complex $X$ has $k$th coboundary expansion $h^k (X) $.  If we were to try to create a nontrivial cohomology class $[ \beta ] \in H^{k} (X)$ by deleting some $(k+1)$-dimensional cells, and we would like that nontrivial class to be large: $|| [\beta] || \geq m$,  then we would need to delete from $X$ at least $h^k(X) \cdot m$ cells of dimension $k+1$.  

\end{ob}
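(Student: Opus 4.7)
The plan is to show that if $S$ is the set of deleted $(k+1)$-cells and $X'=X\setminus S$ is the resulting subcomplex, then any cochain $\beta$ representing a nontrivial cohomology class of norm $\geq m$ in $X'$ must force $|S|\geq h^k(X)\cdot m$ by a direct application of the coboundary expansion inequality for $X$.

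First I would carefully record what changes (and what does not) when passing from $X$ to $X'$. Since only $(k+1)$-cells are removed, the $(k-1)$- and $k$-skeleta are unaffected, so $C^{k-1}X' = C^{k-1}X$, $C^k X' = C^k X$, and the coboundary $d:C^{k-1}\to C^k$ is the same on both complexes. In particular $B^k X' = B^k X$, so the quotient norm on $C^k X$ induced by $d$ is the same whether computed in $X$ or in $X'$: $\|[\beta]\|_{X'}=\|[\beta]\|_X$ for every $\beta\in C^k X$.

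Next, take $\beta\in C^k X'$ with $[\beta]\neq 0$ in $H^k(X')$ and $\|[\beta]\|_{X'}\geq m$. Because $B^k X' = B^k X$, nontriviality in $H^k(X')$ means $\beta\in C^k X\setminus B^k X$, so $\beta$ is a legal test cochain for the coboundary expansion of $X$. The cells of $X^{(k+1)}$ on which $d_X\beta$ and $d_{X'}\beta$ can differ are precisely the deleted cells in $S$, and $d_{X'}\beta=0$ because $\beta$ is a cocycle in $X'$. Hence $d_X\beta$ is supported inside $S$, giving $\|d_X\beta\|\leq|S|$.

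Combining these observations with the definition of $h^k(X)$ yields
$$|S|\;\geq\;\|d_X\beta\|\;\geq\;h^k(X)\cdot\|[\beta]\|_X\;=\;h^k(X)\cdot\|[\beta]\|_{X'}\;\geq\;h^k(X)\cdot m,$$
which is the desired bound. I do not foresee any serious obstacle; the only point that requires a moment of care is the equality $B^k X' = B^k X$ (i.e.\ verifying that removing top-dimensional cells does not alter the quotient norm at dimension $k$), and then recognizing that $d_X\beta$ is supported on $S$, which is what lets us equate ``number of deleted cells'' with an upper bound on $\|d_X\beta\|$.
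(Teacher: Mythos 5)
Your argument is correct and is exactly the reasoning the paper intends (the paper states this as an unproved observation, so your write-up just makes the implicit argument explicit): deleting $(k+1)$-cells leaves $C^kX$, $B^kX$, and the quotient norm untouched, any resulting nontrivial cocycle $\beta$ of $X'$ has $d_X\beta$ supported entirely on the deleted set $S$, and the defining inequality $\lVert d_X\beta\rVert \geq h^k(X)\,\lVert[\beta]\rVert$ then forces $|S|\geq h^k(X)\cdot m$. You correctly flag the one point needing care, namely that $B^kX'=B^kX$ so the two quotient norms coincide.
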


\begin{defn}
Let $X$ be a cell complex.  The maximum $k$-degree $D_k(X)$ is the maximum number of $(k+1)$-dimensional faces containing a $k$-face.
\end{defn}

We are particularly interested in infinite families.

\begin{defn}

A family of polyhedral complexes $X_n$ is called a {\it degree-relative} $k$-{\it coboundary expander family} if $|X^{(k)}_n| \to\infty$ and $\frac{h^k (X_n) }{D_k (X) } \geq c > 0$ for all $n \geq 1$. 

\end{defn}

One of the main points of this article is to exhibit families of fairly sparse degree-relative $k$-coboundary expanders.  (By ``fairly sparse'' we mean not bounded of degree, but of degree growing only logarithmically in the number of vertices.) We construct these families probabilistically.

\section{Random complexes}\label{RC}

In this section, we  introduce our main object of study: random polyhedral complexes.  In order to properly motivate them, we  begin with one of the most well-studied random graph models due to Erd\H{o}s and R\'enyi. 

\begin{defn} The {\it Erd\H{o}s-R\'enyi random graph} 
$G(n,p)$ is the probability space of all graphs on vertex set $[n] = \{ 1, 2, \dots, n \}$ with each edge having probability $p$, independently.  In other words for every graph $G$ on vertex set $[n]$, $$\mathbb{P}[G \in G(n,p)] = p^{e(G)} (1-p)^{{n \choose 2} - e(G)},$$ where $e(G)$ denote the number of edges of $G$.
\end{defn}

We say that a sequence of probability spaces $\{ \Omega_n \}$ has property $\mathcal{P}$ {\it asymptotically almost surely} (a.a.s.) if the probability of $\mathcal{P}$ tends to $1$ as $n$ tends to infinity.\\

The following theorem is due to Erd\H{o}s and R\'enyi \cite{ER}.

\begin{thm} \label{thm:ERthresh} Let $\omega=\omega(n)$ be any function that tends to infinity with $n$.   If  $p = (\log{n}+ \omega) / n $ then $G(n,p)$ is a.a.s.\ connected, and if $p = ( \log{n} - \omega)/n$ then $G(n,p)$ is a.a.s.\ disconnected.
\end{thm}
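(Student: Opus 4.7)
The proof splits naturally into the two directions. For the subcritical half, the plan is to focus on isolated vertices. Let $X$ denote the number of vertices of $G(n,p)$ with no incident edge. Linearity gives $\mathbb{E}[X] = n(1-p)^{n-1}$, and substituting $p = (\log n - \omega)/n$ together with $\log(1-p) = -p + O(p^2)$ yields $\mathbb{E}[X] = (1+o(1))e^{\omega} \to \infty$. To conclude that $X > 0$ a.a.s., I would apply the second moment method: a direct count gives $\mathbb{E}[X^2] = n(1-p)^{n-1} + n(n-1)(1-p)^{2n-3}$, from which $\mathbb{E}[X^2]/(\mathbb{E}[X])^2 = 1/\mathbb{E}[X] + (n-1)/(n(1-p)) \to 1$, and Chebyshev's inequality forces $\mathbb{P}[X = 0] \to 0$. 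Since an isolated vertex disconnects $G(n,p)$, this handles the subcritical direction.

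For the supercritical half, the same linearity calculation with $p = (\log n + \omega)/n$ now gives $\mathbb{E}[X] = (1+o(1))e^{-\omega} \to 0$, so by Markov there are a.a.s.\ no isolated vertices. It remains to rule out connected components of every size $2 \le k \le n/2$. I would union-bound: for each such $k$, the expected number $\mu_k$ of such components is at most
\[
\binom{n}{k} k^{k-2} p^{k-1} (1-p)^{k(n-k)},
\]
by choosing the vertex set, choosing a spanning tree via Cayley's formula, insisting its edges are present, and forbidding every crossing edge to the complement. The target is then $\sum_{k=2}^{\lfloor n/2 \rfloor} \mu_k = o(1)$.

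The main technical obstacle is this uniform control of $\mu_k$ across the whole range. Using $\binom{n}{k} \le n^k/k!$, $k^{k-2}/k! \le e^k/k^2$, and $(1-p)^{k(n-k)} \le e^{-pk(n-k)}$, one gets $\mu_k \le (e^k/k^2)\, n^k p^{k-1} e^{-pk(n-k)}$. Substituting $pn = \log n + \omega$ shows that for constant $k$ the bound collapses to $\mu_k \lesssim (\log n/n)^{k-1} e^{-k\omega}$, forming a geometric sequence in $k$ whose leading term $\mu_2 \lesssim (\log n/n)\, e^{-2\omega}$ is already $o(1)$; for $k$ growing up to $n/2$, the exponent $pk(n-k)$ is of order $n\log n$ and eclipses every combinatorial factor, so the corresponding tail is super-polynomially small. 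Summing over all $k$ therefore yields $o(1)$, which combined with the vanishing of isolated vertices gives connectivity a.a.s. The isolated-vertex first and second moment computations are essentially routine; the genuine work lies in the small-component union bound and the case split between constant-sized and macroscopic components.
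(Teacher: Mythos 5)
The paper does not prove this statement: it is quoted as a classical theorem of Erd\H{o}s and R\'enyi with a citation to \cite{ER}, so there is no in-paper argument to compare against. Your proposal is the standard textbook proof and is essentially correct. The subcritical half via the first and second moment method on isolated vertices is right: your formula $\mathbb{E}[X^2]=n(1-p)^{n-1}+n(n-1)(1-p)^{2n-3}$ correctly accounts for the shared non-edge between two isolated vertices, and Chebyshev then gives $X>0$ a.a.s. The supercritical half via Markov on isolated vertices plus the Cayley-formula union bound over components of size $2\le k\le n/2$ is also the standard route, and the observation that a disconnected graph has a smallest component of size at most $n/2$ closes the argument.

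Two small points worth tightening if you write this out in full. First, both asymptotic computations tacitly assume $\omega=o(\log n)$ (otherwise $p$ could be negative or exceed $1$); one reduces to this case by monotonicity of connectedness in $p$, which should be said. Second, your heuristic that for growing $k$ the exponent $pk(n-k)$ is ``of order $n\log n$'' is only accurate for $k=\Theta(n)$; for intermediate $k$ it is of order $k\log n$. The bound still goes through: writing $\mu_k\le \frac{n}{k^2(\log n+\omega)}\,q^k$ with $q=e(\log n+\omega)n^{-1/2}e^{-\omega/2}\to 0$ (using $n-k\ge n/2$) shows the sum over $3\le k\le n/2$ is $O((\log n)^2 n^{-1/2})$, and the $k=2$ term is $O(e^{-2\omega}\log n/n)$, so the total is $o(1)$ uniformly. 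With those details filled in, the proof is complete.
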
   

Once $p$ is much larger than $\log{n} / n$, $G(n,p)$ is connected \cite{ER} and it exhibits edge expansion.  The following theorem is due to Benjamini, Haber, Krivelevich, and Lubetzky \cite{ERexpand}.  Let $D(G)$ denote the maximum degree of $G$.

\begin{thm} \label{thm:ERiso} Let $0 < \epsilon < 1/2 $ be fixed.  Then there exists a constant $C=C(\epsilon)$ such that if $p \ge  C \log{n} / n $, then a.a.s.\ $G \in G(n,p)$ has Cheeger constant bounded by $$(1/2 - \epsilon) D(G)  \le h(G) \le (1/2+\epsilon) D(G).$$
\end{thm}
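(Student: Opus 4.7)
The plan is a first-moment plus concentration argument, matching the size of edge boundaries against the maximum degree. Since the theorem compares $h(G)$ to $D(G)$, the first step is to pin down the maximum degree. Each vertex has degree distributed as $\operatorname{Bin}(n-1,p)$ with mean $(n-1)p$, and for $p \geq C \log n / n$ with $C = C(\epsilon)$ sufficiently large, Chernoff's inequality together with a union bound over the $n$ vertices gives $D(G) = (1 \pm \epsilon/4)\, pn$ asymptotically almost surely. Fix this ``good degree event'' and condition on it for the rest of the argument.

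For the upper bound $h(G) \leq (1/2+\epsilon) D(G)$, I would choose any one set $A \subset [n]$ with $|A| = \lfloor n/2 \rfloor$. The number of edges between $A$ and its complement $B$ is $\operatorname{Bin}\!\bigl(|A|(n-|A|), p\bigr)$ with mean $\approx pn^2/4$, so a single Chernoff estimate (no union bound needed) shows it is within a $(1+\epsilon/4)$ factor of its mean. Dividing by $|A|$ gives $\#E(A,B)/|A| \leq (1+\epsilon/4)\, pn/2$, and combining with $D(G) \geq (1-\epsilon/4)\, pn$ yields the upper bound after cleaning up constants.

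The lower bound is the substantive part. I must show with high probability that every proper nonempty $A \subset [n]$ with $|A| = k \leq n/2$ satisfies $\#E(A,B) \geq (1/2-\epsilon)\, D(G)\, k$. For each fixed such $A$, $\#E(A,B)$ is binomial with mean $\mu_k = pk(n-k) \geq pkn/2$. I would choose $\delta_\epsilon > 0$ (say $\delta_\epsilon = 7\epsilon/4$) small enough that $(1-\delta_\epsilon)\, pkn/2 \geq (1/2-\epsilon)(1+\epsilon/4)\, pnk$; on the good degree event this target exceeds $(1/2-\epsilon)\, D(G)\, k$. Chernoff then bounds
$$\Pr\!\bigl[\#E(A,B) < (1-\delta_\epsilon)\mu_k\bigr] \;\leq\; \exp(-c_\epsilon \mu_k) \;\leq\; \exp(-c_\epsilon\, pkn/2),$$
and a union bound over the $\binom{n}{k} \leq (en/k)^k$ candidates $A$ of size $k$ costs at most
$$\exp\!\bigl(k \log(en/k) - c_\epsilon\, pkn/2 \bigr).$$
Using $p \geq C \log n / n$ and choosing $C = C(\epsilon)$ large enough that $c_\epsilon C / 2$ dominates the logarithmic factor uniformly in $k$, this is bounded by $\exp(-k \log n)$ for every $k \in \{1, \ldots, \lfloor n/2 \rfloor\}$. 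Summing the geometric series $\sum_{k \geq 1} n^{-k}$ then tends to zero as $n \to \infty$, so a.a.s.\ no bad $A$ of any size exists.

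The main obstacle, as usual for such Chernoff-plus-union-bound arguments, is achieving uniformity over the size $k$: for $k$ of order $n$ there are exponentially many sets $A$ to union-bound over, while for small $k$ there are fewer sets but the expected boundary $\mu_k$ is also smaller and Chernoff correspondingly weaker. The balance between $\log\binom{n}{k} \lesssim k\log(en/k)$ and $c_\epsilon \mu_k \gtrsim c_\epsilon pkn$ is precisely what forces the threshold $p \gtrsim \log n / n$ and determines the dependence $C = C(\epsilon)$.
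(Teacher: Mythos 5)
Your proof is correct, and it is essentially the paper's approach: the paper cites Benjamini--Haber--Krivelevich--Lubetzky for this statement, sketches the upper bound exactly as you do (one balanced cut plus concentration of $D(G)$), and obtains the lower bound as the $k=0$, $X_n=K_n$ case of its Theorem \ref{main}, whose proof is the same Chernoff-plus-union-bound over subsets of size $k\le n/2$ using the deterministic fact $|A|\,|B|\ge |A|\,n/2$ that you invoke via $\mu_k\ge pkn/2$. Your write-up just unwinds that specialization directly, including the correct bookkeeping of the $\log\binom{n}{k}$ versus $c_\epsilon pkn$ trade-off that forces $p\gtrsim \log n/n$.
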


\medskip

\begin{rem} The results in \cite{ERexpand} are more precise than what we state here, since in particular the entire random graph process is considered. Moreover, how $C$ depends on $\epsilon$ can be made much more explicit. A small point is that the statements in \cite{ERexpand} are for minimum degree, but this is equivalent since in this regime all the vertices have roughly the same degree ($\approx pn$), so the same statement holds for maximum degree. \\
\end{rem}

\begin{rem} The upper bound on $h(G)$ in Theorem \ref{thm:ERiso} is straightforward.  If one splits the vertices in half randomly, one should expect about $p n^2 /4$ edges between the two halves.  Dividing by $n/2$ gives an upper bound of $pn/2$ , compared to the maximum degree which is only slightly larger than $pn$ \cite{Bollo}.  The lower bound  follow as a corollary of our main result.\\
\end{rem}

Linial and Meshulam defined $2$-dimensional analogues of $G(n,p)$ and proved a cohomological analogue of Theorem \ref{thm:ERthresh} \cite{LM06}, and Meshulam and Wallach extended the result to arbitrary dimension \cite{MW09}. 

Let $\Delta_n$ denote the $(n-1)$-dimensional simplex and $\Delta^{(i)}_n$ its $i$-skeleton.

\begin{defn}{The {\it Linial-Meshulam complex}}

The random simplicial complex $Y_k (n,p)$ is the probability space of all simplicial complexes with complete $k$-skeleton and each $(k+1)$-dimensional face appearing independently with probability $p$.  In other words for  every $$\Delta^{(k)}_n \subseteq Y \subseteq \Delta^{(k+1)}_n,$$ we have $$\mathbb{P}(Y \in Y_k (n,p) ) = p^{\lvert Y^{(k+1)} \rvert} (1-p)^{ \binom{n}{k+2} - \lvert Y^{(k+1)} \rvert },$$
\end{defn}

In particular, $Y_0 (n,p) \cong G(n,p)$.

\bigskip

The main result of \cite{LM06} (for $k=1$) and \cite{MW09} (for $k \ge 2$) is the following.

\begin{thm}   \label{thm:LMW} Let $\omega=\omega(n)$ be any function that tends to infinity with $n$.   If  $p = \frac{(k+1) \log{n}+ \omega }{ n} $ and $Y \in Y(n,p)$ then a.a.s.\ $H^k(Y,\Z_2)=0$ and if $p =\frac{(k+1)  \log{n} - \omega}{n}$ then a.a.s.\ $H^k(Y,\Z_2) \neq 0$.
\end{thm}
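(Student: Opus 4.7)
The two halves use very different techniques, so I treat them separately. Throughout I use that $Y$ has complete $k$-skeleton, so $C^k(Y) = C^k(\Delta_n^{(k)})$ and $B^k(Y) = B^k(\Delta_n^{(k)})$.

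For the non-vanishing direction ($p = ((k+1)\log n - \omega)/n$), my plan is to exhibit an \emph{isolated} $k$-face a.a.s.: a $k$-face $\sigma$ of $Y$ that lies in no $(k+1)$-face of $Y$. Letting $X$ count these, the first moment is
\[
\mathbb{E}[X] = \binom{n}{k+1}(1-p)^{n-k-1} \sim \frac{e^\omega}{(k+1)!} \to \infty,
\]
and a routine second-moment computation, splitting pairs $(\sigma_1,\sigma_2)$ by $|\sigma_1 \cap \sigma_2|$, gives $\mathrm{Var}(X)/\mathbb{E}[X]^2 \to 0$, so Chebyshev yields $X \geq 1$ a.a.s. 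For any isolated $\sigma$, the indicator cochain $\mathbb{1}_\sigma$ is a cocycle of $Y$ since all $n-k-1$ of the $(k+1)$-simplices of $\Delta_n$ containing $\sigma$ are missing from $Y$. It is not a coboundary: if $\mathbb{1}_\sigma = d\alpha$ in $Y$, then applying $d$ again in the full $(k+1)$-skeleton $\Delta_n^{(k+1)}$ gives $d\mathbb{1}_\sigma = d^2\alpha = 0$, contradicting the fact that $d\mathbb{1}_\sigma$ evaluates to $1$ on each of the $n-k-1 \geq 1$ many $(k+1)$-simplices of $\Delta_n$ containing $\sigma$.

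For the vanishing direction ($p = ((k+1)\log n + \omega)/n$), the plan is a union bound over minimal representatives of potential nontrivial classes. If $H^k(Y;\Z_2) \neq 0$, pick $\beta \in C^k(\Delta_n;\Z_2) \setminus B^k$ with $\|\beta\| = \|[\beta]\|$; the cocycle condition in $Y$ is exactly $Y^{(k+1)} \cap \mathrm{supp}(d\beta) = \emptyset$, an event of probability $(1-p)^{\|d\beta\|}$, where $\|d\beta\|$ is computed in $\Delta_n^{(k+1)}$. The key input is a Meshulam--Wallach co-isoperimetric inequality: every minimal $\beta$ of support $m \leq \binom{n}{k+1}/2$ satisfies $\|d\beta\| \geq (1-o(1))\,n\,\|\beta\|$. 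Granting this, each such $\beta$ is a cocycle of $Y$ with probability at most $n^{-(1-o(1))(k+1)m} e^{-(1-o(1))m\omega}$, and there are at most $n^{(k+1)m}/m!$ candidate $\beta$ of support $m$, so the union bound yields total probability $\sum_{m \geq 1} e^{-m\omega(1-o(1))}/m! = O(e^{-\omega}) \to 0$. The large-support case, $m$ near $\binom{n}{k+1}/2$, is handled by pairing $\beta$ with its complement within its coset.

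The main obstacle is the sharp co-isoperimetric inequality. The naive averaging bound $\|d\beta\| \geq ((n-k-1)/(k+2))\|\beta\|$ is off by a factor of $k+2$ in the exponent and is insufficient for the union bound to close. The fix exploits minimality via the link decomposition: at each vertex $v$, write $\beta = v \ast \ell_v + \beta_{\neg v}$, where $\ell_v \in C^{k-1}(\Delta_n \setminus v;\Z_2)$ is the link of $\beta$ at $v$; minimality of $\beta$ forces $\|\ell_v\|$ to be small relative to $\|\beta\|$ (else the flip $\beta \mapsto \beta + d\ell_v$ strictly reduces the support). This balanced condition, combined with an induction on $k$, restores the missing factor of $n$ in the isoperimetric constant and is where the real work lies.
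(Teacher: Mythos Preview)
This theorem is not proved in the paper; it is quoted from \cite{LM06} (the case $k=1$) and \cite{MW09} ($k\ge 2$). What the paper itself contributes toward it is only a weak form of the vanishing direction: applying Theorem~\ref{main} to $X_n=\Delta_n$ with $h^k(\Delta_n)=n/(k+2)$ yields $H^k(Y;\Z_2)=0$ a.a.s.\ once $p \ge \dfrac{2(k+1)(k+2)\log n+\omega}{\epsilon^2 n}$, i.e.\ the sharp threshold up to the constant factor $2(k+2)/\epsilon^2$ (the paper says exactly this in the remark following the simplex corollary). The paper does not address the non-vanishing direction at all. So your proposal is not to be compared with a proof in this paper but with the original Linial--Meshulam / Meshulam--Wallach arguments.

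Your non-vanishing half is the standard isolated-face second-moment argument and is fine.

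Your vanishing half, however, rests on a claim that is false as stated. You assert that every minimal $\beta$ with $\|\beta\|=m$ satisfies $\|d\beta\|\ge (1-o(1))\,n\,m$. For $k=1$ take the partial star $\beta=\{\{1,j\}:2\le j\le m+1\}$ with $m<(n-1)/2$. A short case check shows $\beta$ is minimal in its $B^1$-coset, yet $\|d\beta\|=m(n-m-1)$, so $\|d\beta\|/\|\beta\|=n-m-1$; for $m$ a fixed fraction of $n$ this is bounded away from $n$ (e.g.\ $\approx 2n/3$ when $m\approx n/3$). Thus the crude union bound ``$\le n^{(k+1)m}/m!$ minimal cochains of support $m$, each a cocycle with probability $\le n^{-(1-o(1))(k+1)m}$'' does not close; at the true threshold the count swamps the probability. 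The actual Meshulam--Wallach argument is organized differently: it does not prove a sharper pointwise isoperimetric inequality but instead bounds, for each value of $b=\|d\beta\|$, the number of minimal $\beta$ with that coboundary size (using the link/degree structure you allude to), so that the relevant sum $\sum_\beta (1-p)^{\|d\beta\|}$ can be controlled. Your last paragraph gestures at the right tools (links, minimality forcing small $\|\ell_v\|$), but the way you package them---into a single inequality $\|d\beta\|\ge (1-o(1))n\|\beta\|$---is not what those tools deliver, and a correct write-up has to rework the union bound itself rather than plug a sharper constant into the naive one.
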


In the next two sections, we refine the statement that $H^k (Y)=0$ to a more quantitative geometric statement by estimating $h^k (Y)$.  At the same time we expand the statement to more general random complexes.

We now define {\it random} $p$-{\it subcomplexes} (which include the Linial-Meshulam complex as an example).

\begin{defn}
Let $\{ X_n \}$ be a family of finite regular CW complexes.  Then let $X^{(k)}_n \subset Y_{n,p} \subset X^{(k+1)}_n$ be a random subcomplex of $X_n$ chosen so that each $(k+1)$-cell of $X_n$ is included in $Y_{n,p}$ independently with probability $p$.  We refer to $Y = Y_{n,p}$ as a {\it random} $p$-{\it subcomplex of }$X_n$.
\end{defn}

Our main result is that if $\{ X_n \}$ has strong enough expansion properties, and if $p$ is large enough, then random $p$-subcomplexes of $\{ X_n \}$ also have strong expansion properties.  We make this statement precise and prove it in the next section.

\section{Main result}\label{RS}

In this section we show that random $p$-subcomplexes of polyhedra can inherit coboundary expansion from the ambient complex.  In particular, this provides a threshold beyond which random subcomplexes have vanishing cohomology with high probability.


\begin{defn}
Let $\{ X_n \}$ be a sequence of finite $(k+1)$-dimensional polyhedral complexes. We say that family $\{X_n\}$ is {\it face-relative expanding} if 
$$ \frac{\log |X^{(k)}_n |}{h^k(X_n)} \longrightarrow 0 \quad \text{    as     } n \to \infty$$
\end{defn}

In the next section, we give several examples of face-relative expanding families of polyhedra, which include in particular the simplex of dimension $n$.


We now proceed to our main result.

\begin{thm}\label{main}
Let $\epsilon > 0$ and let $\omega=\omega(n)$ be any function such that $\lim_{n \to \infty} \omega = \infty$.  Let $Y_{n,p}$ be a random $p$-subcomplex of a face-relative expanding family of polyhedra $X_n$.  If $$p \geq \frac{ 2 \log |X^{(k)}_n| + \omega }{\epsilon^2 h^k(X_n)}$$ then $$h^k (Y_{n,p}) \geq (1-\epsilon) p \cdot h^k(X_n)  \quad \text{ a.a.s. }$$

\end{thm}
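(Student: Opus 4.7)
The plan is to compare coboundaries in $Y = Y_{n,p}$ with coboundaries in $X = X_n$ cochain by cochain, then union bound. The starting observation is that because $Y$ and $X$ share the same $k$-skeleton and the same $(k-1)$-skeleton, we have $C^{k-1}Y = C^{k-1}X$ and $C^kY = C^kX$, and the coboundary $d\colon C^{k-1}\to C^k$ is the same in both complexes. Therefore $B^kY = B^kX$ and the quotient norms agree: $\|[\beta]\|_Y = \|[\beta]\|_X$ for every $\beta \in C^kX$. Moreover, for each $\beta \in C^kX$, the support of $d_Y\beta$ is just $\mathrm{supp}(d_X\beta) \cap Y^{(k+1)}$, so
\[
\|d_Y\beta\| = \sum_{\sigma \in \mathrm{supp}(d_X\beta)} \mathbb{1}[\sigma \in Y],
\]
a sum of $\|d_X\beta\|$ independent Bernoulli$(p)$ random variables.

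Next, for each cohomology class in $C^kX / B^kX$ distinct from $B^kX$, pick a representative $\beta$ of minimum support, so that $\|\beta\| = \|[\beta]\|_X$; call the collection of these representatives $\mathcal{R}$. For any fixed $\beta \in \mathcal{R}$ with $\|\beta\| = m$, coboundary expansion of $X$ gives $\|d_X\beta\| \ge h^k(X_n)\cdot m$, and the one-sided Chernoff bound yields
\[
\mathbb{P}\bigl[\|d_Y\beta\| < (1-\epsilon)\, p\, \|d_X\beta\|\bigr]
\le \exp\!\left( -\tfrac{\epsilon^2}{2}\, p\, \|d_X\beta\|\right)
\le \exp\!\left( -\tfrac{\epsilon^2}{2}\, p\, h^k(X_n)\, m\right).
\]
Since the number of such minimum representatives with $\|\beta\|=m$ is at most $\binom{N}{m} \le e^{m\log N}$, where $N = |X_n^{(k)}|$, a union bound gives
\[
\sum_{m=1}^{N} \exp\!\Bigl( m\log N - \tfrac{\epsilon^2}{2}\, p\, h^k(X_n)\, m \Bigr).
\]
The hypothesis on $p$ forces $\tfrac{\epsilon^2}{2} p\, h^k(X_n) \ge \log N + \omega/2$, so each exponent is bounded by $-m\omega/2$, and the sum is at most $e^{-\omega/2}/(1 - e^{-\omega/2}) \to 0$.

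Thus, a.a.s.\ $\|d_Y\beta\| \ge (1-\epsilon)\, p\, h^k(X_n)\, \|[\beta]\|_X$ for every $\beta \in \mathcal{R}$, which (passing to arbitrary representatives, using $\|d_Y\beta\|$ and $\|[\beta]\|_Y$ being class functions) gives $h^k(Y_{n,p}) \ge (1-\epsilon)\, p\, h^k(X_n)$, as desired. The main technical obstacle is ensuring that the union bound has only $\binom{N}{m}$ terms at each size $m$ rather than the full $2^N$ cochains: this is precisely the place where one must work modulo $B^kX$ and restrict attention to minimum-support representatives, and where the face-relative expanding hypothesis enters (through the factor $\log N$ in the Chernoff exponent) to guarantee that the probability bound is summable.
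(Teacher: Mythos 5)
Your proposal is correct and follows essentially the same route as the paper: reduce to minimum-support representatives modulo $B^kX$, apply a one-sided Chernoff bound using $\|d_X\beta\| \ge h^k(X_n)\|[\beta]\|$, and union bound over the at most $\binom{N}{m}$ cochains of each support size $m$, with the hypothesis on $p$ making the resulting sum vanish. The only cosmetic difference is that you bound the sum by a geometric series via $\binom{N}{m}\le e^{m\log N}$, whereas the paper rewrites it as $(1+e^{-\epsilon^2 p h^k/2})^{N}-1$ and takes logarithms; both computations are equivalent.
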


\begin{proof}

Let $\beta \in C^{k} X_n$ be a $k$-cochain in $X_n$ (and hence also in $Y_{n,p}$ since $Y$ contains the entire $k$-skeleton of $X_n$).  We let $\Vert d \beta \rVert_Y$ denote the norm of $d \beta \in C^{k+1} Y$, as distinguished from simply $\lVert d \beta \rVert$ which denotes in norm in the full complex $X_n$.  Since each $(k+1)$-cell is included in $Y$ independently with probability $p$, by the Chernoff-Hoeffding bounds \cite{chernoff52} (or see, for example, \cite{concentration09}),

$$\mathbb{P} \bigl[ \lVert d\beta \rVert_Y \leq (1-\epsilon) p \lVert d\beta \rVert \bigr] \leq e^{-\frac{\epsilon^2}{2} p\cdot \lVert d\beta \rVert} $$\\
Let $n = \lVert d \beta \rVert$ and $m = \lVert [\beta] \rVert = {\rm min}_\alpha \lVert \beta + d \alpha \rVert$, so that $n \geq h^k(X_n) \cdot m$.  We must now check that the inequality $$\lVert d\beta \rVert_Y \geq (1-\epsilon) p \cdot h^k(X_n) \Vert[ \beta ]\rVert$$ \\
holds for every $\beta \in C^{k} Y$.  We note that it suffices to check this inequality on all $\beta$ for which $\lVert \beta \rVert = \lVert [\beta] \rVert$.  Indeed, if the inequality holds on all such {\it minimizing} $\beta$, it then holds for {\it all} $\beta$. \\

Here we apply a union bound.  We can upper bound the number minimizing $\beta$ of  $m$ by counting {\it all} $k$-cochains of norm $m$.  

$$\mathbb{P} \bigl[ \exists \beta, \; ||d \beta ||_Y \leq (1-\epsilon) p n \bigr] \leq \sum_{m \geq 1} \binom{ |X^{(k)}_n|}{ m} e^{-\frac{\epsilon^2 p m \cdot h^k(X_n)}{2} } $$

$$= \biggl[ 1 + e^{-\frac{\epsilon^2 p \cdot h^k(X_n)}{2} } \biggr]^{|X^{(k)}_n|} - 1$$

In order to show that this quantity goes to $0$, we need to show that the leftmost quantity goes to $1$, or simply that $$|X^{(k)}_n| \log \bigl[ 1 + e^{-\frac{\epsilon^2 p \cdot h^k(X_n)}{2} } \bigr] \longrightarrow 0$$

Now, $$\log \bigl[ 1 + e^{-\frac{\epsilon^2 p \cdot h^k(X_n)}{2} } \bigr] \leq e^{-\frac{\epsilon^2 p \cdot h^k(X_n)}{2} },$$ and finally, since $$p \geq \frac{ 2 \log |X^{(k)}| + \omega }{\epsilon^2 h^k(X_n)}, $$ we have, $$  |X^{(k)}_n|  e^{-\frac{\epsilon^2 p \cdot h^k(X_n)}{2} }  \leq  e^{-\frac{\omega}{2}} \longrightarrow 0.$$

\end{proof}

\begin{rem}The theorem above provides, in particular, a threshold for $p$ beyond which a random $p$-subcomplex a.a.s.\ has vanishing cohomology, $\tilde{H}^{k} (Y_{n,p} ) = 0$.  For all of the examples in the next section, this is sharp up to a constant factor.
\end{rem}

\section{Examples}\label{DP}

In this section we give three examples of face-relative expanding families of simplicial complexes.  Their random $p$-subcomplexes give degree-relative expanders.

\subsection{The Simplex}

\begin{prop}Let $\Delta_n$ be the simplex on $n$ ordered vertices.  Then $$h^k (\Delta_n) = \frac{n}{k+2}.$$
\end{prop}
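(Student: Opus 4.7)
The plan is to establish the equality in two halves: a lower bound $h^k(\Delta_n) \geq n/(k+2)$ via a cone averaging argument, and a matching upper bound via an explicit multipartite cochain.

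For the lower bound, I will introduce for each vertex $v$ and each index $j$ the cone operator $C_v \colon C^j(\Delta_n) \to C^{j-1}(\Delta_n)$ defined by $(C_v\beta)(\tau) = \beta(v \cup \tau)$ if $v \notin \tau$, and $0$ otherwise. A short $\mathbb{Z}_2$-computation, splitting into cases according to whether $v$ lies in the input simplex, yields the cone identity
\[
\beta = d(C_v \beta) + C_v(d\beta).
\]
Thus $\beta$ and $C_v(d\beta)$ represent the same cohomology class, so $\|[\beta]\| \leq \|C_v(d\beta)\|$. The support of $C_v(d\beta)$ is in bijection with the $(k+1)$-simplices in $\mathrm{supp}(d\beta)$ that contain $v$, so summing over all vertices gives $\sum_v \|C_v(d\beta)\| = (k+2)\|d\beta\|$. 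By averaging, some vertex $v$ achieves $\|C_v(d\beta)\| \leq (k+2)\|d\beta\|/n$, which combines with the previous inequality to give $h^k(\Delta_n) \geq n/(k+2)$.

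For the matching upper bound, I assume $(k+2) \mid n$ and partition $[n]$ into $k+2$ equal blocks $V_0,\dots,V_{k+1}$ of size $n/(k+2)$. Let $\beta \in C^k(\Delta_n)$ be the indicator of those $k$-simplices meeting each of $V_1,\dots,V_{k+1}$ in exactly one vertex. A case analysis on the color multiset of a $(k+1)$-simplex $\tau$ shows that $d\beta(\tau)=1$ precisely when $\tau$ is \emph{rainbow}, i.e.\ meets every block in exactly one vertex. This gives $\|\beta\| = (n/(k+2))^{k+1}$ and $\|d\beta\| = (n/(k+2))^{k+2}$, so $\|d\beta\|/\|\beta\| = n/(k+2)$.

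The main obstacle is finishing the upper bound by checking $\|[\beta]\| = \|\beta\|$, i.e.\ that this rainbow indicator is a support-minimizer in its cohomology class. Unwinding the definitions, this is equivalent to the isoperimetric-type statement that $|\mathrm{supp}(\beta) \cap \mathrm{supp}(d\alpha)| \leq \tfrac{1}{2}\|d\alpha\|$ for every $\alpha \in C^{k-1}(\Delta_n)$. Unlike the lower bound, this does not fall out of the cone identity—the cone bound $\|[\beta]\| \leq (k+2)\|d\beta\|/n$ already yields $\|\beta\|$ as an upper estimate on $\|[\beta]\|$, so minimality must be proved separately. I would attack it by exploiting the large symmetry group $\prod_i S_{|V_i|}$ that fixes $\beta$ and by partitioning the faces in $\mathrm{supp}(d\alpha)$ according to their color profile, showing block by block that at most half of each color class of $\mathrm{supp}(d\alpha)$ lies in $\mathrm{supp}(\beta)$.
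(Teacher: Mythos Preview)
The paper gives no self-contained proof; it cites \cite{MW09} and \cite{gromov-sing} and remarks only that the argument is ``analogous to the standard technique for proving the linear isoperimetric inequality in the round sphere.'' Your cone-averaging lower bound $h^k(\Delta_n)\ge n/(k+2)$ is exactly that technique and matches what the cited references do; it is correct and is the only direction the paper actually uses downstream.

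The gap is in your upper bound. You compute $\|d\beta\|/\|\beta\|=n/(k+2)$ for the rainbow indicator, but the minimality $\|[\beta]\|=\|\beta\|$ is only sketched, and the sketch does not work as written. You propose to partition $\mathrm{supp}(d\alpha)$ by color profile and show ``block by block that at most half of each color class of $\mathrm{supp}(d\alpha)$ lies in $\mathrm{supp}(\beta)$.'' But $\mathrm{supp}(\beta)$ is itself a single color class (the $k$-faces meeting each of $V_1,\dots,V_{k+1}$ exactly once), so for that class the fraction lying in $\mathrm{supp}(\beta)$ is $1$, not at most $1/2$, while for every other class it is $0$. A class-by-class bound therefore cannot yield the inequality $|\mathrm{supp}(\beta)\cap\mathrm{supp}(d\alpha)|\le\tfrac12\|d\alpha\|$; what is actually required is a global comparison showing that the $\beta$-colored faces of $d\alpha$ are outnumbered by the remaining faces, and that needs a different mechanism (for instance an explicit injection from $\beta$-colored faces of $d\alpha$ into non-$\beta$-colored ones, or a shifting/compression argument). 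As a side note, the exact equality $h^k(\Delta_n)=n/(k+2)$ holds only when $(k+2)\mid n$ (e.g.\ $h^0(K_3)=2>3/2$), so your divisibility hypothesis is not just a convenience, and the proposition as literally stated is slightly informal; the lower bound, which is what Theorem~\ref{main} needs, holds for all $n$.
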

To the authors' best knowledge, the proof of this proposition first appeared in \cite{MW09} and later independently in \cite{gromov-sing}.  The proof method is analogous to the standard technique for proving the linear isoperimetric inequality in the round sphere --- this analogy is discussed in the expository article \cite{dominic1}.

By Theorem \ref{main} we have the following.

\begin{cor} 
Fix $0 < \epsilon < 1$ and suppose $Y_k (n,p)$ is a random $p$-subcomplex in $\Delta_n$, with $$p \geq \frac{2(k+1)(k+2) \log n + \omega}{\epsilon^2 n}.$$
Then $Y$ a.a.s. satisfies $$h^k (Y) \geq (1-\epsilon) p\frac{n}{k+2}.$$ 
On the other hand the maximum degree satisfies $D_k (Y) \approx np$, so $Y$ is a.a.s.\ a degree-relative expander.\\
\end{cor}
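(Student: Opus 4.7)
The plan is to obtain this corollary as a direct instantiation of Theorem \ref{main} applied to the sequence $X_n = \Delta_n$, with the coboundary expansion input supplied by the preceding proposition. First I would record the two pieces of data about the simplex: $h^k(\Delta_n) = n/(k+2)$ from the proposition, and $|X^{(k)}_n| = \binom{n}{k+1}$, so that $\log|X^{(k)}_n| \leq (k+1)\log n$. With these in hand the face-relative expansion hypothesis of Theorem \ref{main} reduces to checking that $(k+1)(k+2)\log n / n \to 0$, which is immediate, so the simplices form a face-relative expanding family.

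Next I would verify that the hypothesis on $p$ in the corollary implies the hypothesis on $p$ in Theorem \ref{main}. Substituting the estimates above into $\frac{2\log|X^{(k)}_n| + \omega}{\epsilon^2\, h^k(X_n)}$ yields an upper bound of the form $\frac{2(k+1)(k+2)\log n + \omega'}{\epsilon^2 n}$, where $\omega'$ still tends to infinity. The corollary's hypothesis on $p$ is therefore at least this threshold, so Theorem \ref{main} applies and delivers
\[
h^k(Y) \;\geq\; (1-\epsilon)\, p \cdot h^k(\Delta_n) \;=\; (1-\epsilon)\, p\,\frac{n}{k+2} \quad \text{a.a.s.}
\]

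For the degree-relative conclusion, I would separately control $D_k(Y)$. Every $k$-face $\sigma$ of $\Delta_n$ is contained in exactly $n-k-1$ faces of dimension $k+1$, each included in $Y$ independently with probability $p$, so the degree of $\sigma$ in $Y$ is a binomial random variable with mean $(n-k-1)p$. A Chernoff bound together with a union bound over the $\binom{n}{k+1}$ $k$-faces shows that under our hypothesis on $p$, all $k$-degrees are concentrated around $(n-k-1)p$, giving $D_k(Y) = (1+o(1))\,pn$ a.a.s. The main subtle point here is that $pn$ must be large enough compared to $\log|X^{(k)}_n| \sim (k+1)\log n$ for the Chernoff bound to survive the union bound, but this is automatic from the chosen threshold on $p$.

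Combining the two estimates yields
\[
\frac{h^k(Y)}{D_k(Y)} \;\geq\; \frac{(1-\epsilon)\, p\, n/(k+2)}{(1+o(1))\, pn} \;=\; \frac{1-\epsilon}{k+2}(1+o(1))
\]
a.a.s., which is a positive constant, establishing that $Y$ is a.a.s.\ a degree-relative $k$-coboundary expander. I expect no real obstacle here: the proof is essentially a bookkeeping exercise plugging the known expansion $n/(k+2)$ of $\Delta_n$ and the trivial bound $\log\binom{n}{k+1} \leq (k+1)\log n$ into Theorem \ref{main}, with only the degree concentration needing a small independent Chernoff argument.
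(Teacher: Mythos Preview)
Your proposal is correct and follows exactly the approach the paper intends: the corollary is stated immediately after the proposition $h^k(\Delta_n)=n/(k+2)$ with only the phrase ``By Theorem~\ref{main} we have the following,'' so the paper treats it as a direct plug-in, and you have simply written out the bookkeeping (the bound $\log\binom{n}{k+1}\le (k+1)\log n$, the verification of face-relative expansion, and the Chernoff concentration of $D_k(Y)$ around $np$) that the paper leaves implicit. The only extra content you supply beyond the paper is the explicit degree-concentration argument, which the paper suppresses entirely with the phrase ``$D_k(Y)\approx np$.''
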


\begin{rem}  This shows in particular that $H^k(Y) = 0$ for $$p \ge \frac{2(k+1)(k+2) \log{n}+ \omega}{n} .$$  This recovers one direction of Theorem \ref{thm:LMW}, up to a constant factor.
\end{rem}

\begin{rem} The lower bound on $h^k$ is essentially tight.  In particular it is easy to check that a.a.s.\ $h^k(Y) \le (1 + \epsilon) p \frac{n}{k+2}$.\\
\end{rem}

\subsection{The Cross-polytope}

\begin{prop} \label{cross} Let $\widehat{Q}_n$ denote the $n$-dimensional cross-polytope. Then $$h^k (\widehat{Q}_n) \geq \frac{2(n-k-1)}{k+2}.$$\\
\end{prop}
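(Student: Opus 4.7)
The plan is to adapt the Meshulam--Wallach cone-averaging argument (the one behind the simplex formula $h^k(\Delta_n) = n/(k+2)$) to the cross-polytope, accounting for the fact that $\widehat{Q}_n$ has antipodal vertex pairs which cannot simultaneously lie in a face.

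As usual it suffices to bound $\lVert d\beta \rVert / \lVert \beta \rVert$ from below on \emph{minimizing} cochains $\beta \in C^k \widehat{Q}_n$, that is, ones with $\lVert \beta \rVert = \lVert [\beta] \rVert$. For each of the $2n$ vertices $v$ of $\widehat{Q}_n$, define the cone cochain $v * \beta \in C^{k-1} \widehat{Q}_n$ by $(v*\beta)(\tau) = \beta(\tau \cup \{v\})$ whenever $\tau \cup \{v\}$ is a face of $\widehat{Q}_n$ (equivalently, when $v, -v \notin \tau$), and $0$ otherwise. A short case analysis of $(\beta + d(v*\beta))(\sigma)$ on a $k$-face $\sigma$ splits into three cases: (i) if $v, -v \notin \sigma$, the value is $d\beta(\sigma \cup \{v\})$, just as in the simplex; (ii) if $v \in \sigma$, it vanishes; and (iii) if $-v \in \sigma$, it equals $\beta(\sigma) + \beta(\sigma')$, where $\sigma' = (\sigma \setminus \{-v\}) \cup \{v\}$ is the antipodal flip of $\sigma$ at $-v$.

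The minimizing property then gives $\lVert \beta + d(v*\beta) \rVert \geq \lVert [\beta] \rVert = \lVert \beta \rVert$ for each $v$, so summing over $v$ yields $2n \lVert \beta \rVert \leq \sum_v \lVert \beta + d(v*\beta) \rVert$. Splitting the right-hand side according to the three cases above, case (i) contributes exactly $(k+2) \lVert d\beta \rVert$: each $(k+1)$-face $\tau$ with $d\beta(\tau) = 1$ is accounted for by the $k+2$ pairs $(\tau \setminus \{v\}, v)$ with $v \in \tau$, and each such pair automatically satisfies $-v \notin \tau \setminus \{v\}$ because $\tau$ is a face.

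The main (and really the only) obstacle is the ``antipodal flip'' contribution from case (iii), which has no analogue in the simplex proof. I would bound it by pairing the contributing tuples via the involution $(\sigma, v) \leftrightarrow (\sigma', -v)$: each orbit contains exactly one $\sigma$ from the support of $\beta$ (since $\beta(\sigma) \neq \beta(\sigma')$) together with a choice of one of its $k+1$ vertices at which to flip, so case (iii) contributes at most $2(k+1) \lVert \beta \rVert$. Assembling the pieces gives $2n \lVert \beta \rVert \leq (k+2) \lVert d\beta \rVert + 2(k+1) \lVert \beta \rVert$, which on rearrangement is the claimed inequality $\lVert d\beta \rVert / \lVert \beta \rVert \geq 2(n-k-1)/(k+2)$.
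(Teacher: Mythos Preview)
Your argument is correct. The case analysis of $(\beta + d(v*\beta))(\sigma)$ checks out, and your involution pairing in case~(iii) cleanly bounds the antipodal-flip error by $2(k+1)\lVert\beta\rVert$, giving exactly the stated bound after rearrangement.

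The paper takes a genuinely different route. Rather than coning directly in $\widehat{Q}_n$, it passes via Poincar\'e--Lefschetz duality to the $n$-cube $Q_n$, identifying $C^k\widehat{Q}_n \cong C_{n-k-1}Q_n$, so that bounding $h^k(\widehat{Q}_n)$ becomes a \emph{filling} problem: given a $j$-cycle $z$ in $Q_n$ (with $j = n-k-2$), find a $(j+1)$-chain $y$ with $\partial y = z$ and small volume. The paper proves a cube filling lemma, $\mathrm{vol}_{j+1}(y) \le \frac{n-j}{2(j+1)}\mathrm{vol}_j(z)$, by induction on the cube dimension: split the cube along a hyperplane pair $H_\pm$, fill the difference $(z\cap H_+)-(z\cap H_-)$ inductively inside $H_-$, and add the prism $(z\cap H_+)\times\mathbf{I}$; averaging over the $2n$ facets gives a recursion for the filling constant. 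What this buys is an isoperimetric inequality for the cube of independent interest, together with a nice illustration of how duality exchanges coboundary expansion and filling. What your approach buys is directness: it stays entirely inside $\widehat{Q}_n$, needs no auxiliary lemma, and makes transparent exactly where the factor $2(k+1)$ is lost relative to the simplex (the antipodal flips). Both methods land on the same numerical bound.
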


Let $Q_n$ denote the $n$-dimensional cube.  By duality, we have the canonical isomorphism $\iota: C^k  \widehat{Q}_n \to C_{n-k-1} Q_n$ and the following diagram commutes.

$$\begin{CD}
\cdots @>>> C^{k} \widehat{Q}_n @>{d}>> C^{k+1} \widehat{Q}_n @>>> \cdots \\
@. @V{\iota}VV @VV{\iota}V @.\\
\cdots @>>> C_{n-k-1} Q_n @>{\partial}>> C_{n-k-2} Q_n @>>> \cdots\\
\end{CD}$$\\

Therefore, each cochain $\beta \in C^{k} \widehat{Q}_n$ can be identified as an $(n-k-1)$-chain in $Q_n$ and we seek a minimal $(n-k-1)$-chain which fills the boundary $\partial \beta$.  We first prove an isoperimetric inequality in $Q_n$.\\

\begin{lem}\label{cube}
Let $z $ be a $j$-cycle in the cube (with $\Z_2$ coefficients), then  there exists a $(j+1)$-chain $y$, with $\partial y = z$ and $${\rm vol}_{j+1} (y)  \leq \frac{n-j}{2(j+1)} {\rm vol}_j (z) $$
\end{lem}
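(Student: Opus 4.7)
The plan is to prove the lemma by induction on $n$, using a coordinate-averaging argument that reduces to a cube of one lower dimension. Fix $j$ and assume the inequality holds in cubes of dimension less than $n$. For each coordinate direction $i \in [n]$, decompose $Q_n \cong Q_{n-1} \otimes I_i$ with $I_i = [0,1]_i$, and correspondingly write
\[
z \;=\; z^{(i)}_0 \;+\; z^{(i)}_1 \;+\; w^{(i)} \otimes I_i,
\]
where $z^{(i)}_b$ lives in the slice $\{x_i = b\}$ and $w^{(i)}$ is a $(j-1)$-chain in $Q_{n-1}$. The cycle condition $\partial z = 0$ forces $\partial z^{(i)}_0 = \partial z^{(i)}_1 = w^{(i)}$ and $\partial w^{(i)} = 0$; the norms decompose additively as $\|z\| = \|z^{(i)}_0\| + \|z^{(i)}_1\| + \|w^{(i)}\|$.

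The key observation is that a $j$-face $\sigma$ of $Q_n$ with fixed-coordinate set $F$ (of size $n-j$) contributes to $z^{(i)}_0 + z^{(i)}_1$ exactly when $i \in F$. Summing over $i$ thus counts each face of $z$ exactly $n-j$ times:
\[
\sum_{i=1}^n \bigl(\|z^{(i)}_0\| + \|z^{(i)}_1\|\bigr) \;=\; (n-j)\,\|z\|.
\]
Hence one can fix a direction $i$ with $\|z^{(i)}_0\| + \|z^{(i)}_1\| \leq \frac{n-j}{n}\|z\|$, and after possibly swapping the two slices, assume $\|z^{(i)}_0\| \leq \frac{1}{2}\bigl(\|z^{(i)}_0\| + \|z^{(i)}_1\|\bigr)$.

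Since $z^{(i)}_0 + z^{(i)}_1$ (viewed in $Q_{n-1}$) is a $j$-cycle, the inductive hypothesis provides $y' \in C_{j+1}(Q_{n-1})$ with $\|y'\| \leq \frac{n-1-j}{2(j+1)}\|z^{(i)}_0 + z^{(i)}_1\|$. Set
\[
y \;:=\; z^{(i)}_0 \otimes I_i \;+\; y' \text{ (placed in the slice } x_i = 1\text{)},
\]
the prism sweep of $z^{(i)}_0$ together with the inductive filling in the top slice. A direct boundary computation—$\partial(z^{(i)}_0 \otimes I_i)$ contributes $w^{(i)} \otimes I_i$ together with a copy of $z^{(i)}_0$ in each slice, and $\partial y'$ contributes $z^{(i)}_0 + z^{(i)}_1$ in the top slice—shows that the two top-slice copies of $z^{(i)}_0$ cancel in $\Z_2$, leaving precisely $z$.

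Combining the estimates and invoking the identity $\frac{1}{2} + \frac{n-1-j}{2(j+1)} = \frac{n}{2(j+1)}$ yields
\[
\|y\| \;\leq\; \|z^{(i)}_0\| + \tfrac{n-1-j}{2(j+1)}\bigl(\|z^{(i)}_0\| + \|z^{(i)}_1\|\bigr) \;\leq\; \tfrac{n}{2(j+1)}\cdot\tfrac{n-j}{n}\|z\| \;=\; \tfrac{n-j}{2(j+1)}\|z\|,
\]
closing the induction. The base case $n = j+1$ is immediate: the only nontrivial $j$-cycle in $Q_{j+1}$ is $\partial Q_{j+1}$ (with $2(j+1)$ facets), and $y = Q_{j+1}$ itself saturates the bound. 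The main subtlety is the sharpness of the arithmetic: both the averaging factor $(n-j)/n$ and the identity $\frac{1}{2} + \frac{n-1-j}{2(j+1)} = \frac{n}{2(j+1)}$ are tight, so any slack—say a symmetric filling across both slices, or bounding $\|z^{(i)}_0\|$ by the full $\|z^{(i)}_0\| + \|z^{(i)}_1\|$—inflates the constant and breaks the induction.
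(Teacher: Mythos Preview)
Your proof is correct and follows essentially the same strategy as the paper: induct on the ambient dimension, use the prism $(z\cap\text{facet})\times I$ together with an inductive filling of the difference of the two opposite slices, and control the constant via the counting identity $\sum_i(\lVert z_0^{(i)}\rVert+\lVert z_1^{(i)}\rVert)=(n-j)\lVert z\rVert$. The only cosmetic difference is that the paper averages over all $2n$ facets to obtain the recursion $C_{n,j}\le \tfrac{n-j}{2n}(1+2C_{n-1,j})$ and then solves it, whereas you pigeonhole to a single best direction and slice and close the induction with the identity $\tfrac{1}{2}+\tfrac{n-1-j}{2(j+1)}=\tfrac{n}{2(j+1)}$; these are two ways of packaging the same averaging step.
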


\begin{proof}[Proof of Lemma \ref{cube}]

Let $C_{M,j}$ be the optimal isoperimetric constant for $j$-cycles in $Q_M$.  

We inductively bound $C_{M,j}$
\bigskip

If $M= j+1$, then the set of cycles in $C_j Q_{j+1}$ consists of the $1$-dimensional subspace spanned by the boundary of $Q_{j+1}$.  Therefore, $C_{j+1,j} = \frac{1}{2(j+1)}$
\bigskip

Now, allow us to suppose that $C_{M,k}$ has been bounded from above for all $j+1 \leq M < n$.
\bigskip

Let $H_+$ be an $(n-1)$-dimensional face of $Q_n$, and let $H_-$ denote the opposite face (or ``shadow," as it is sometimes called).  Each $j$-face, $x \in Q^{(j)}_n$, is contained in $n-j$ faces of dimension $n-1$.  Hence $$\sum_{H_+ \in Q^{(n-1)}_n} {\rm vol}_j (z \cap H_+)= (n-j) {\rm vol}_j (z) $$

Now, $(z \cap H_{+}) - (z \cap H_{-})$ can be identified with a $j$-cycle in $H_{-} \cong Q_{n-1}$.  Its volume is no greater than ${\rm vol}_j (z \cap H_{+} ) + {\rm vol}_j (z \cap H_{-})$.  We fill this cycle using the induction hypothesis, i.e. there exists $y_- \in C_{j+1} H_-$ such that $\partial y_- = (z \cap H_{+}) - (z \cap H_{-})$ and 
 $${\rm vol}_{j+1} (y_-) \leq C_{n-1, j} [{\rm vol}_j (z \cap H_{+} ) + {\rm vol}_j (z \cap H_{-})]$$

We also consider the chain $z \cap H_+ \in C_j H_+$.  Taking the product of this chain with the unit interval, we have a natural identification of $(z \cap H_+ ) \times {\bf I} \in C_{j+1} Q_N$.  We will denote this $j+1$-chain as $y_+$.
\bigskip

We will take $y$ to be $y = y_+ - y_-$.  Now, we simply check that $$\partial y = (z \cap H_+) + (\partial (z\cap H_+) \times {\bf I} ) + ( z \cap H_-) = z$$

Now we have constructed a $(j+1)$-chain, $y$, satisfying $\partial y = z$ and

$$\begin{array}{lll}

{\rm vol}_{j+1} (y) &\leq& {\rm vol}_j (z \cap H_+) + C_{n-1,j} {\rm vol}_j [(z \cap H_+ ) - (z \cap H_-)] \\

&\text{  }&\\

&\leq& {\rm vol}_j (z \cap H_+) + C_{n-1,j} [{\rm vol}_j (z \cap H_+ ) + {\rm vol_j} (z \cap H_-)]\\

\end{array}$$

Summing over choices of $H_{+}$ yields,

$$2n {\rm vol} (y) \leq (n-j) {\rm vol}_j (z)  + 2 (n-j)  C_{n-1, j} {\rm vol}_j (z) $$

$$ \Rightarrow C_{n,j} \leq \frac{n-j}{2n} (1 + 2C_{n-1,j})$$

Now one can check that $C_{n,j} = \frac{n-j}{2(j+1)}$ is a solution to the recursive formula above with $C_{j+1,j} = \frac{1}{2(j+1)}$

\end{proof}

\begin{proof}[Proof of Proposition \ref{cross}]
Suppose  $\beta$ is a $k$-cochain corresponding to an $(n-k-1)$-chain in the cube.  Therefore, let $j= n-k-2$ and we see that there exists $\alpha \in C^{k-1} \widehat{Q}_n$ such that
$$||\beta + d\alpha || \leq \frac{k+2}{2(n-k-1)} ||d \beta ||$$
\end{proof}

Now we have the following.

\begin{cor}
Let $ V_{n,p}$ be a random $p$-subcomplex of the $n$-dimensional cross-polytope where $(k+1)$-dimensional faces are chosen independently with probability $p$.  Suppose that $$p \geq \frac{(k+2)(k+1) \log n + \omega}{\epsilon^2(n-k-1)},$$
where $\omega \to \infty$ as $n \to \infty$. Then $V \in {V}_{n,p}$  a.a.s.\ satisfies 
$$h^k (V) \geq  \frac{2(1-\epsilon)(n-k-1)p}{k+2}   .$$
On the other hand the maximum $k$-degree $D_k$ satisfies $D_k \approx np$ so $V$ is a.a.s.\ a degree-relative expander. 
\end{cor}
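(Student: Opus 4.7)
The plan is to apply Theorem~\ref{main} with ambient family $X_n = \widehat{Q}_n$, invoking Proposition~\ref{cross} for the lower bound on $h^k(X_n)$; the whole argument reduces to bookkeeping plus a routine degree concentration.

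First I would count the faces. A $k$-face of $\widehat{Q}_n$ is specified by choosing $k+1$ of the $n$ coordinate axes together with a sign on each, so $|\widehat{Q}_n^{(k)}| = 2^{k+1}\binom{n}{k+1}$ and $\log|\widehat{Q}_n^{(k)}| = (k+1)\log n + O(1)$ for fixed $k$. Combining this with Proposition~\ref{cross}, which gives $h^k(\widehat{Q}_n) \geq 2(n-k-1)/(k+2)$, the ratio $\log|X_n^{(k)}|/h^k(X_n) = O\bigl((k+2)(k+1)\log n / n\bigr) \to 0$, so $\{\widehat{Q}_n\}$ is face-relative expanding.

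Next I would verify that the stated lower bound on $p$ implies the hypothesis of Theorem~\ref{main}. Using the estimates above,
$$\frac{2\log|X_n^{(k)}| + \omega_0}{\epsilon^2 h^k(X_n)} \;\leq\; \frac{(k+2)(k+1)\log n + O(1) + \tfrac{k+2}{2}\omega_0}{\epsilon^2(n-k-1)}$$
for any divergent $\omega_0$, and choosing $\omega_0 = 2\omega/(k+2) - O(1)$ (still tending to infinity) makes the right-hand side at most the stated $p$. Theorem~\ref{main} then yields $h^k(V) \geq (1-\epsilon) p \cdot h^k(\widehat{Q}_n) \geq 2(1-\epsilon)(n-k-1)p/(k+2)$ a.a.s., which is the claimed lower bound.

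Finally, for the degree claim: each $k$-face of $\widehat{Q}_n$ is contained in exactly $2(n-k-1)$ faces of dimension $k+1$ (pick a new axis among the $n-k-1$ remaining directions, with two signs), so $D_k(V)$ is the maximum of $|\widehat{Q}_n^{(k)}|$ many Binomial$(2(n-k-1), p)$ random variables. Since in this regime the expected degree $2(n-k-1)p$ dominates $\log|\widehat{Q}_n^{(k)}|$ by a factor tending to infinity, a standard Chernoff plus union-bound argument gives $D_k(V) = (1+o(1)) \cdot 2(n-k-1)p$ a.a.s., so the ratio $h^k(V)/D_k(V)$ is bounded below by $(1-\epsilon)/(k+2) + o(1)$, establishing degree-relative expansion. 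There is no genuine obstacle here: everything is a direct reduction to Theorem~\ref{main} together with a routine concentration bound for $D_k$; the only nuance worth mentioning is the absorption of the $O(1)$ term and constant factors into the divergent function $\omega$.
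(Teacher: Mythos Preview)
Your approach is exactly the paper's: count the $k$-faces of $\widehat{Q}_n$ as $2^{k+1}\binom{n}{k+1}$, bound $\log|\widehat{Q}_n^{(k)}|$ by $(k+1)\log n + O(1)$, and feed this together with Proposition~\ref{cross} into Theorem~\ref{main}; the paper in fact gives less detail than you do and does not spell out the degree claim at all.

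One small slip in your last paragraph: at the threshold value of $p$, the expected degree $2(n-k-1)p$ exceeds $\log|\widehat{Q}_n^{(k)}|$ only by a constant factor (roughly $2(k+2)/\epsilon^2$), not by a factor tending to infinity. Consequently the Chernoff-plus-union-bound argument gives $D_k(V)\le (1+\delta)\cdot 2(n-k-1)p$ a.a.s.\ for some fixed $\delta=\delta(\epsilon,k)$, rather than the $(1+o(1))$ you claim. This still yields $D_k(V)=\Theta(np)$, which is all that is required for the degree-relative expander conclusion, so your argument survives with this cosmetic correction.
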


\begin{proof}

We simply note that $$|\widehat{Q}^{(k)}_n |= |Q^{(n-k-1)}_n| = \binom{n}{k+1} 2^{k+1} \quad $$
$$\Longrightarrow \quad \log |\hat{Q}^{(k)}_n | \leq (k+1) \log n + (k+1) \log 2,$$

\bigskip
and the proposition follows.
\end{proof}

\subsection{The Complete $(k+2)$-partite Complex}

Let $\Lambda_{n,k}$ be the complete $(k+2)$-partite $k$-complex with $(n, \dots, n)$ vertices.  In other words, $$\Lambda_{n,k} := \overset{k+2}{\overbrace{ [n] * \cdots * [n]}}$$ where $*$ denotes join and $[n]$ denotes a set of of $n$ vertices.  

\begin{prop}
We have $$h^{k} (\Lambda_{n,k}) \geq \frac{n}{2^{k+1} - 1}.$$
\end{prop}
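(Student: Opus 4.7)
My plan is to prove this by induction on $k$, exploiting the join decomposition $\Lambda_{n,k} = [n]*\Lambda_{n,k-1}$. Given a $k$-cochain $\beta$ on $\Lambda_{n,k}$, every $k$-face either lies entirely in $\Lambda_{n,k-1}$ (missing the new part $[n]$) or has the form $\{v\}*\sigma$ for some $v\in[n]$ and some $(k-1)$-face $\sigma$ of $\Lambda_{n,k-1}$. Correspondingly I write $\beta=(\beta_0,\{\beta_v\}_{v\in[n]})$ with $\beta_0\in C^k(\Lambda_{n,k-1})$ the restriction of $\beta$ and $\beta_v\in C^{k-1}(\Lambda_{n,k-1})$ the link cochain $\beta_v(\sigma):=\beta(\{v\}*\sigma)$. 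A short bookkeeping calculation using $\partial(\{v\}*\tau)=\tau + \{v\}*\partial\tau$ shows that for each $v$ the slice $(d\beta)_v \in C^k(\Lambda_{n,k-1})$ of $d\beta$ above $v$ satisfies $(d\beta)_v = \beta_0 + d\beta_v$, and hence $\|d\beta\| = \sum_{v \in [n]}\|(d\beta)_v\|$.

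Performing the analogous decomposition of a tentative filling $\alpha \in C^{k-1}(\Lambda_{n,k})$ as $(\alpha_0, \{\alpha_v\}_v)$ and minimizing over the link components $\alpha_v$ reduces the quotient norm to
\[
\|[\beta]\|_{\Lambda_{n,k}} \le \inf_{\alpha_0\in C^{k-1}(\Lambda_{n,k-1})} \Bigl[\|\beta_0 + d\alpha_0\| + \sum_{v\in[n]}\|[\beta_v + \alpha_0]\|_{\Lambda_{n,k-1}}\Bigr].
\]
The key move is to specialize $\alpha_0 := \beta_{v^*}$ for a cleverly chosen $v^* \in [n]$. This turns the first summand into $\|(d\beta)_{v^*}\|$, while each term in the sum becomes $\|[\beta_v + \beta_{v^*}]\|$ with coboundary $d\beta_v + d\beta_{v^*} = (d\beta)_v + (d\beta)_{v^*}$ (the $\beta_0$ pieces cancel mod $2$).

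Next I apply the inductive hypothesis $h^{k-1}(\Lambda_{n,k-1}) \ge n/(2^k-1)$ together with the triangle inequality to obtain
\[
\sum_v \|[\beta_v + \beta_{v^*}]\|_{\Lambda_{n,k-1}} \le \tfrac{2^k-1}{n}\bigl(\|d\beta\| + n\|(d\beta)_{v^*}\|\bigr).
\]
Then I pick $v^*$ minimizing $\|(d\beta)_{v^*}\|$, which by averaging over $[n]$ is at most $\|d\beta\|/n$. Combining the two contributions gives
\[
\|[\beta]\| \le 2^k\|(d\beta)_{v^*}\| + \tfrac{2^k-1}{n}\|d\beta\| \le \tfrac{2^{k+1}-1}{n}\|d\beta\|,
\]
which is exactly the claimed bound.

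The main obstacle I expect is organizing the base case so that the recursion $D_k = 2D_{k-1} + 1$ (for the constants $D_k := n/h^k(\Lambda_{n,k})$) starts from the initial value $D_{-1}=0$ needed to land precisely at $D_k = 2^{k+1}-1$. This will require interpreting the degenerate complex $\Lambda_{n,-1}$ appropriately (essentially as a point, so that the link cochains $\beta_v$ collapse to scalars in $\Z_2$) and verifying that the join decomposition, together with the choice $\alpha_0 = \beta_{v^*}$, retains the right constant at this degenerate level; all the slack in the argument accumulates there, and getting the stated constant rather than a weaker $\sim 3\cdot 2^k$ constant is where I would focus the care.
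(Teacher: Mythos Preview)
Your inductive step is essentially the paper's argument. Both use the join decomposition $\Lambda_{n,k}=[n]*\Lambda_{n,k-1}$, compare the link cochains $\beta_v$ pairwise, and invoke the inductive hypothesis on $\Lambda_{n,k-1}$. The only cosmetic difference is that you pick a single cone vertex $v^*$ minimizing $\lVert (d\beta)_{v^*}\rVert$, whereas the paper constructs a candidate $\zeta_v$ for \emph{every} $v$ and then averages $\frac{1}{n}\sum_v\lVert\zeta_v\rVert$. Both routes land on the same recursion
\[
\frac{1}{c_k}\ \le\ \frac{1}{n}\Bigl(1+\frac{2n-2}{c_{k-1}}\Bigr),
\]
which is what yields $c_k\ge n/(2^{k+1}-1)$.

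Your handling of the base case, however, is off. You want $D_{-1}=0$, but that is not what the degenerate level gives: with $\Lambda_{n,-1}=[n]$ one has $h^{-1}([n])=n$ (the coboundary of the empty simplex is the all-ones $0$-cochain of norm $n$), so $D_{-1}=1$, and your recursion $D_k\le 2D_{k-1}+1$ then only yields $D_k\le 2^{k+2}-1$, a factor of two worse than claimed. Interpreting $\Lambda_{n,-1}$ as a point makes it worse, not better. The fix is simply to start the induction at $k=0$: the paper states $h^0(K_{n,n})=n$ directly (so $c_0=n$), and this is elementary to check. Once you plug $c_0=n$ into the recursion, the induction closes at exactly $n/(2^{k+1}-1)$, and your worry about ``accumulated slack'' disappears.
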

\bigskip

 For example, when $k=0$, $\Lambda_{n,0}$ is the complete bipartite graph, $K_{n,n}$.  The Cheeger constant of the graph $K_{n,n}$ is given by $h(K_{n,n}) = n$.

\bigskip

\begin{proof}

The proof is by induction on $k$.

\bigskip

We will suppose that for all $\beta \in C^{k-1} \Lambda_{n,k-1}$, $$\frac{ \Vert d \beta \rVert}{\lVert [\beta]\rVert } \geq c_{k-1}.$$

\bigskip

There are two types of $k$-faces in $\Lambda = \Lambda_{n,k}$.  We say that a $k$-face, $[v_0, \dots, v_k]$, is in type one, $\mathcal{T}_1$, if it contains a vertex from the first factor $[n]$ of the simplicial join; there are $k n^{k+1} $ many faces of type $1$.  We say that $[v_1, \dots, v_k]$ is of type $2$, $\mathcal{T}_2$, if all its vertices come from the second through $(k+1)$-th factors of the join; there are $n^{k+1}$ many faces of type $2$.

\bigskip

For example, in a $(k+1)$-dimensional face $[v_0 , \dots, v_{k+1} ]$, the subface $[\hat{v_0} , v_1, \dots v_{k+1} ]$ is of type $2$, whereas the subfaces  $ [ v_0, \dots, \hat{v_i} , \dots, v_{k+1} ]$ are all of type $1$.

\bigskip

Let $\beta \in C^k \Lambda$.  Let $v,w \in [n]$ be two vertices in the first factor of the join.  Define $\eta_{w,v} \in C^{k-1} \Lambda_{n, k-1}$ by $$\eta_{w,v} [ v_1, \dots, v_{k} ] := \beta [ w, v_1, \dots , v_{k} ] - \beta [ v, v_1, \dots, v_{k}] .$$

For each $\eta_{w,v}$, we apply the induction hypothesis to find $\alpha_{w,v} \in C^{k-2} Z_{n, k-1}$ such that $$ c_{k-1}||\eta_{w,v} + d\alpha_{w,v} || \leq ||d\eta_{w,v}||.$$  Using this we will define $\zeta_v \in C^{k} \Lambda_{n,k}$ as

$$\zeta_v [v_0, \dots, v_k] := \left\{
\begin{array}{cl}
  (\eta_{v_0, v} + \alpha_{v_0, v}) [ \hat{v_0}, \dots, v_k] , & \text{if}\; [v_0, \dots , v_k] \in \mathcal{T}_1\\
d \beta [ v, v_0, \dots, v_k ] , & \text{if}\; [v_0, \dots , v_k] \in \mathcal{T}_2
\end{array}\right.$$

It is straightforward to check that $d\zeta_v = d\beta$.

\bigskip

Now we average the norms of the $\zeta_v$:

$$\frac{1}{n} \sum_v \lVert \zeta_v \rVert = \frac{1}{n} \Bigl(    \lVert d \beta \rVert + \sum_{v,w} \lVert \eta_{v,w} + d\alpha_{v,w} \rVert \Bigr) $$

$$\leq \frac{\lVert d\beta \rVert}{n} + \frac{1}{n \cdot c_{k-1}} \sum\limits_{v \neq w}    \Bigl\lVert d\beta [ w, \cdot ] - d\beta [ v, \cdot ] \Bigr\rVert$$

$$\leq \frac{\lVert d\beta \rVert }{n}  + \frac{1}{n \cdot c_{k-1}} \sum_{v}  \Bigl( \lVert d \beta \rVert - \lVert d\beta[v,\cdot] \rVert \Bigr)$$

$$+ \frac{n-1}{n \cdot c_{k-1}} \sum_v \lVert d\beta [v, \cdot ] \rVert $$

$$= \frac{1}{n} \Bigl( 1 + \frac{2n - 2}{c_{k-1}} \Bigr) ||d \beta||$$

Therefore we can set $$\frac{1}{c_k} := \frac{1}{n} \biggl( 1 + \frac{2n - 2}{c_{k-1}} \biggr) $$

Finally, noting that $c_0 = n$, we can check that $c_k \geq \frac{n}{2^{k+1} - 1}$:

$$c_k = n \biggl( \frac{ c_{k-1}   }{c_{k-1} + 2n -2  } \biggr) \geq  \frac{ n^2}{ n + (2^{k} - 2) n}  = \frac{n}{2^k - 1}$$

That is, $$h^k (\Lambda) \geq  \frac{n}{2^{k+1} - 1}$$

\end{proof}

\begin{cor}

Fix $k$, and let $W_{n,p}$ be a random $p$-subcomplex of the complete $(k+2)$-partite $k$-complex on $(n,\dots, n)$ vertices with $$p \geq \frac{ 2(2^{k+1} - 1) (k+1) \log n + \omega}{\epsilon^2 n}.$$
Then $W \in W_{n,p}$ a.a.s.\ satisfies $$h^{k} (W) \geq \frac{(1-\epsilon) np}{2^{k+1} -1}.$$
We also have $D_k (W) \approx np$ so $W$ is a.a.s.\ a degree-relative expander.
\end{cor}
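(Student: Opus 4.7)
The plan is to invoke Theorem \ref{main} directly, with $X_n = \Lambda_{n,k}$ as the ambient face-relative expanding family, and then check that the resulting random subcomplex also has the correct maximum $k$-degree.

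First I would record the two quantitative inputs about $\Lambda_{n,k}$. The preceding proposition gives $h^k(\Lambda_{n,k}) \geq n/(2^{k+1}-1)$, and a count of $k$-faces yields $|\Lambda_{n,k}^{(k)}| = (k+2) n^{k+1}$, since a $k$-face is specified by choosing $k+1$ of the $k+2$ color classes and one vertex from each. Hence $\log|\Lambda_{n,k}^{(k)}| = (k+1)\log n + O(1)$, and the ratio $\log|\Lambda_{n,k}^{(k)}|/h^k(\Lambda_{n,k})$ tends to $0$, so the family is face-relative expanding.

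Next I would verify that the hypothesis on $p$ in Theorem \ref{main} is satisfied. Substituting the bounds above,
\[
\frac{2 \log|\Lambda_{n,k}^{(k)}| + \omega}{\epsilon^2 h^k(\Lambda_{n,k})} \leq \frac{2(2^{k+1}-1)\bigl((k+1)\log n + O(1)\bigr) + (2^{k+1}-1)\omega}{\epsilon^2 n}.
\]
The constant term $O(1)$ and the constant factor in front of $\omega$ can be absorbed into the diverging function $\omega$ (replacing $\omega$ by a possibly slightly smaller function still tending to infinity), so the hypothesis on $p$ in the corollary implies the hypothesis of Theorem \ref{main}. The theorem then yields, a.a.s.,
\[
h^k(W) \;\geq\; (1-\epsilon)\, p \cdot h^k(\Lambda_{n,k}) \;\geq\; \frac{(1-\epsilon) n p}{2^{k+1}-1},
\]
which is the main conclusion.

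Finally I would handle the degree claim. In $\Lambda_{n,k}$, a given $k$-face (say it uses $k+1$ of the $k+2$ color classes) is contained in exactly $n$ faces of dimension $k+1$, since one vertex must be chosen from the remaining color class. In $W_{n,p}$, the $k$-degree of a fixed $k$-face is therefore a Binomial$(n,p)$ random variable. Since the threshold on $p$ forces $np \gg \log n$, a Chernoff bound together with a union bound over the $\mathrm{poly}(n)$ many $k$-faces shows that every $k$-degree is $(1+o(1))np$ a.a.s., so $D_k(W) \approx np$. Combining this with the lower bound on $h^k(W)$ gives $h^k(W)/D_k(W) \geq (1-\epsilon)/(2^{k+1}-1) - o(1)$, which is a positive constant as $n\to\infty$, so $W$ is a degree-relative expander a.a.s. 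The only real ``step'' beyond the direct invocation of Theorem \ref{main} is this routine concentration of the maximum degree; no serious obstacle is anticipated.
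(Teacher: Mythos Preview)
Your proposal is correct and follows exactly the template the paper uses for the analogous corollaries (simplex and cross-polytope): count $|\Lambda_{n,k}^{(k)}|=(k+2)n^{k+1}$ so that $\log|\Lambda_{n,k}^{(k)}|=(k+1)\log n + O(1)$, plug this and the bound $h^k(\Lambda_{n,k})\ge n/(2^{k+1}-1)$ into Theorem~\ref{main}, and absorb constants into $\omega$. The paper in fact states this corollary without proof, so your argument is precisely the intended one; the additional Chernoff/union-bound justification for $D_k(W)\approx np$ is more detail than the paper gives but is entirely in line with how it handles the other examples.
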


\section{Concluding Remarks}
\begin{enumerate}

\item In \cite{gromov-sing}, Gromov proposes a definition of expansion very similar to the one given in section \ref{CE}.

\begin{defn}\cite{gromov-sing}
For a polyhedral complex $X$, the $k${\it -th  filling norm}, $\lVert d^{-1}_k \rVert_X$ of $X$ is defined as the smallest constant $c$ so that for every $(k+1)$-coboundary $d \beta \in B^{k+1} X$, there exists $\alpha \in C^k X$ such that $d \alpha = d \beta$ and $$\frac{\lVert \alpha \rVert }{ \lvert X^{(k)} \rvert} \leq c \frac{\lVert d\beta \rVert}{\lvert X^{(k+1)} \rvert}.$$
\end{defn}

It is easy to check that when $H^k (X) = 0$, $$h^k (X) = \frac{\lvert X^{(k+1)} \rvert}{\lVert d^{-1}_k \rVert_X \cdot  \lvert X^{(k)} \rvert}.$$
However, this definition has a significant difference from our coboundary expansion in that $$H^k (X) = 0 \Leftrightarrow h^k (X) > 0,$$ while, on the other hand it is possible for both $\lVert d^{-1}_k \rVert_X < \infty$ and  $H^k (X) \neq 0$.\\
So each definition probably has its advantages.  The definition of $k$th coboundary expansion presented here can detect whether $H^k=0$, and seems to be the most straightforward generalization of edge expansion.  On the other hand, Gromov's definition of filling norm may still give some geometric information in the case that $H^k \neq 0$.\\

\item In each of the above examples we have $h^k (X_n) = \Omega (n) $ while $\log |X^{(k-1)}_n| = O (\log n)$.  In particular, all of these complexes exceed the condition that $h^k (X) = \omega ( \log |X^{(k-1)}| )$.

On the other hand, the $n$-dimensional hypercube, $Q_n$ does not satisfy this condition.  Indeed, $h^k (Q_n) \equiv 1$ (see, for example, \cite{gromov-sing}), while at the same time $$\log |Q^{(k-1)}_n | = \log \Bigl[ \binom{n}{k-1} 2^{n-k+1} \Bigr]= \Omega ( n \log n ).$$
So in particular the cube is not a face-relative expander family.\\

\item  We have discussed cohomology with $\Z_2$ coefficients, which are combinatorially convenient and which provide a natural generalization of edge expansion.  The computations in Section \ref{DP} of the $k$th coboundary expansion of the the simplex, cross-polytope, etc., can be adapted for arbitrary coefficients with $L^1$ norm.  However the proof of the main result in Section \ref{RS} does not extend to $\R$ or $\Z$ coefficients, which are of particular interest.

In the case of graphs, $\R$-expansion with Euclidean norm)is related to the spectral gap of the graph Laplacian as follows.
$$\sqrt{\lambda_1} = h^0 (G ; \R) = \min_{\beta \in C^0 (G; \R) } \frac{ \lVert d\beta \rVert_2 }{\min_c \lVert \beta + c \cdot 1_G \rVert_2 } $$ where $1_G$ is the function which is identically $1$ on the vertices of $G$, and  $\lVert  \cdot \rVert_2$ denotes the standard Euclidean norm.

The spectral gap is in turn related to $\Z_2$ expansion by the Cheeger and Buser inequalities.

\begin{thm}\label{cheeger} \cite{D84} \cite{AM85} \cite{A86}

Let $G$ be a graph with maximum degree $D= D (G)$. Then
$$\frac{\lambda_1}{2} \leq h^0(G; \Z_2) \leq \sqrt{2D\lambda_1}.$$
\end{thm}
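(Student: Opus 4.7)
The plan is to establish the two inequalities separately; the lower bound $\lambda_1/2 \le h^0(G;\Z_2)$ is a one-line Rayleigh-quotient estimate, while the upper bound $h^0(G;\Z_2) \le \sqrt{2D\lambda_1}$ is the classical Cheeger inequality for graphs and uses the standard ``sweep over level sets'' argument.

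For the easy direction, I would let $A \subset V(G)$ be a minimizer in the definition of $h^0(G;\Z_2)$ and plug $\beta = \mathbf{1}_A \in C^0(G;\R)$ into the variational characterization $\sqrt{\lambda_1} = \min_\beta \|d\beta\|_2/\min_c \|\beta + c\cdot \mathbf{1}_G\|_2$ recalled just above the theorem statement. One has $\|d\beta\|_2^2 = |E(A,A^c)|$ directly, while the minimizing constant is $c = -|A|/n$ with $\min_c \|\beta + c\mathbf{1}_G\|_2^2 = |A|(n-|A|)/n$. Since $\max(|A|,|A^c|) \ge n/2$, we have $|A|(n-|A|)/n \ge \min(|A|,|A^c|)/2$, so squaring gives $\lambda_1 \le 2|E(A,A^c)|/\min(|A|,|A^c|) = 2 h^0(G;\Z_2)$.

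For the Cheeger direction, let $f \in C^0(G;\R)$ be a $\lambda_1$-eigenvector of the combinatorial Laplacian $L = D_G - A_G$, with $f \perp \mathbf{1}_G$; after possibly negating $f$ I may assume $|\{v : f(v) > 0\}| \le n/2$, and set $g := f^+ = \max(f,0)$. A standard manipulation (multiply the eigenvalue equation at $v$ by $g(v)$, sum, and observe that the cross terms across $\{f>0\}$ have the favorable sign) yields the truncated Rayleigh-quotient bound $\|dg\|_2^2 \le \lambda_1 \|g\|_2^2$. Now consider the superlevel sets $A_t := \{v : g(v)^2 > t\}$, each of size at most $n/2$. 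The coarea identity gives
\[ \int_0^\infty |E(A_t,A_t^c)|\,dt \;=\; \sum_{uv \in E} |g(u)^2 - g(v)^2|. \]
Factor $|g(u)^2-g(v)^2| = |g(u)-g(v)|\cdot|g(u)+g(v)|$ and apply Cauchy--Schwarz, bounding the first factor by $\|dg\|_2 \le \sqrt{\lambda_1}\,\|g\|_2$ and, via $(a+b)^2 \le 2(a^2+b^2)$ together with the max-degree bound, the second factor by $\sqrt{2D}\,\|g\|_2$. On the other hand, the size constraint $|A_t| \le n/2$ gives $|E(A_t,A_t^c)| \ge h^0(G;\Z_2)\cdot |A_t|$, hence $\int_0^\infty |E(A_t,A_t^c)|\,dt \ge h^0(G;\Z_2) \cdot \|g\|_2^2$. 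Dividing by $\|g\|_2^2$ yields $h^0(G;\Z_2) \le \sqrt{2D\lambda_1}$.

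The main obstacle is really the Cheeger direction: three ingredients (truncation to the positive part, the coarea formula, and the Cauchy--Schwarz factoring of differences of $g^2$) must be assembled in just the right way, and crucially one uses level sets of $g^2$ rather than of $g$ so that the two Cauchy--Schwarz factors balance into the square root of $\lambda_1 D$. Once this scheme is in place the easy direction is essentially immediate from plugging an indicator into the Rayleigh quotient.
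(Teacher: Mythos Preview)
Your argument is correct and is precisely the classical proof of the discrete Cheeger--Buser inequalities as in the cited references \cite{D84,AM85,A86}. Note, however, that the paper does not supply its own proof of this theorem: it is quoted in the concluding remarks as a known result, with attribution, so there is no in-paper proof to compare against. Your write-up would serve perfectly well as the standard proof one would find in those sources.
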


%
%
%
%
%
%

Putting together these facts, we can relate $\Z_2$-expansion and $\R$-expansion for graphs, but it is still unclear whether higher-dimensional analogues of this hold.

\begin{question}  Are there higher-dimensional analogues of the Cheeger and Buser inequalities?  In particular, can one bound the spectral gap of the higher order Laplacians in terms of  $\Z_2$-expansion?
\end{question}

It is worth noting that \cite{gromov-sing} Gromov obtained a relation between $h^1 (X ; \R)$ and  $h^1 (X ; \Z)$, where the first is defined by the Euclidean norm while the second by the $L^1$ norm.\\

\item Most often the term {\it edge expander family} refers to a family of graphs $\{ G_n \}$ of bounded degree.  Random $d$-regular graphs are perhaps the most canonical example; see \cite{Wormald} for a general reference.  However we do not know any such examples in higher dimension.

\begin{question}
Do there exist families of polyhedral complexes $\{ X_n \}$ with $D_k (X_n) $ bounded and $\liminf h^k (X_n ; \Z_2)  > 0$?
\end{question}
Uli Wagner recently noted that Linial-Meshulam random complexes with $p=C / n$ and sufficiently large $C$ are in a certain sense {\it coarse expanders} \cite{Uli}.  These are bounded average degree but not bounded degree. The Ramanujan complexes studied in \cite{Li} \cite{LSV05} \cite{LSV05-2} are of bounded degree, and satisfy certain spectral gap conditions, but it is not clear to us if they satisfy $\Z_2$-expansion in the sense discussed here.
\end{enumerate}

\bibliography{expand_refs.bib}
\bibliographystyle{plain}

\end{document}